\documentclass{amsproc}
\usepackage{amsthm}
\usepackage[usenames,dvipsnames]{color}
\usepackage{amstext}
\usepackage{amssymb}
\usepackage[all]{xy}
\usepackage{hyperref}
\usepackage{amssymb}
\usepackage{mathtools}
\makeatletter
\numberwithin{equation}{section}
\numberwithin{figure}{section}
\theoremstyle{plain}
\newtheorem{thm}{\protect\theoremname}[section]
  \theoremstyle{plain}
  \newtheorem{prop}[thm]{\protect\propositionname}
\theoremstyle{plain}
\newtheorem{lemma}[thm]{\protect\lemmaname}
  \newtheorem{corollary}[thm]{\protect\corollaryname}
  \theoremstyle{definition}
  \newtheorem{question}[thm]{\protect\questionname}
   \newtheorem{example}[thm]{\protect\examplename}
   \newtheorem{definition}[thm]{\protect\definitionname}
  
\makeatother
  \providecommand{\lemmaname}{Lemma}
  \providecommand{\definitionname}{Definition}
  \providecommand{\propositionname}{Proposition}
  \providecommand{\questionname}{Question}
   \providecommand{\examplename}{Example}
  \providecommand{\theoremname}{Theorem}
  \providecommand{\corollaryname}{Corollary}
 
\begin{document}

\title[Log Centres of NCCRs are KLT]{Log Centres of Noncommutative Crepant Resolutions are
  Kawamata Log Terminal: Remarks
on a paper of Stafford and van den Bergh}

\author{Colin Ingalls}

\address{School of Mathematics and Statistics, Carleton University, Canada}

\email{cingalls@math.carleton.ca}

\author{Takehiko Yasuda}

\address{Department of Mathematics, Graduate School of Science, Osaka University,
Toyonaka, Osaka 560-0043, Japan}

\email{yasuda.takehiko.sci@osaka-u.ac.jp}

\thanks{Ingalls was supported by an NSERC Discovery Grant. Yasuda was supported by JSPS KAKENHI Grant Numbers JP22740020, JP18H01112, JP21H04994. The first author would like to thank Kenneth Brown and Michael Wemyss for helpful conversations.  We would also like to thank an anonymous referee for helpful comments.}

\begin{abstract}
  We show that if a finitely generated prime algebra $\Delta$ is a finitely generated maximal Cohen-Macaulay module over its centre $Z$, and has global dimension equal to $\dim Z$, then the pair given by its centre and ramification divisor is Kawamata log terminal.  
\end{abstract}

\maketitle

\global\long\def\AA{\mathbb{A}}
\global\long\def\PP{\mathbb{P}}
\global\long\def\NN{\mathbb{N}}
\global\long\def\GG{\mathbb{G}}
\global\long\def\ZZ{\mathbb{Z}}
\global\long\def\QQ{\mathbb{Q}}
\global\long\def\CC{\mathbb{C}}
\global\long\def\FF{\mathbb{F}}
\global\long\def\LL{\mathbb{L}}
\global\long\def\RR{\mathbb{R}}

\global\long\def\bx{\mathbf{x}}
\global\long\def\bf{\mathbf{f}}

\global\long\def\cod{\mathrm{cod}\,}
\global\long\def\gldim{\mathrm{gldim}\,}
\global\long\def\GKdim{\mathrm{GKdim}\,}
\global\long\def\Z{Z}
\global\long\def\LZ{\mathrm{logZ}}
\global\long\def\Cl{\mathrm{Cl}}
\global\long\def\cN{\mathcal{N}}
\global\long\def\cW{\mathcal{W}}
\global\long\def\cY{\mathcal{Y}}
\global\long\def\cM{\mathcal{M}}
\global\long\def\cF{\mathcal{F}}
\global\long\def\cX{\mathcal{X}}
\global\long\def\cE{\mathcal{E}}
\global\long\def\cJ{\mathcal{J}}
\global\long\def\cO{\mathcal{O}}
\global\long\def\cD{\mathcal{D}}
\global\long\def\cA{\mathcal{A}}

\global\long\def\Spec{\mathrm{Spec}\,}
\newcommand\Hom{\mathrm{Hom}}
\newcommand\End{\mathrm{End}}

\global\long\def\Sing{\mathrm{Sing}\,}
\global\long\def\Br{\mathrm{Br}\,}

\section{Introduction}

We will work over an algebraically closed field of characteristic zero.
Noncommutative crepant resolutions (NCCRs), as discussed in ~\cite{VdBICM}, indicate an interplay between noncommutative algebra and algebraic geometry.
It is not clear when a commutative algebra admits an NCCR, and various necessary conditions have been found.  In~\cite{Stafford-VdB}, it is shown that any commutative algebra admitting an NCCR  has rational singularities. In particular, they show that if an affine prime algebra $\Delta$ that is finite over its centre is homologically homogenous, or equivalently $\Delta$ is Cohen-Macaulay over its centre with finite global dimension, then its centre has rational singularities.  We use standard results about the singularities of the minimal model program~\cite{Kollar-Mori}, and the argument of ~\cite{Stafford-VdB} to show that 
the singularities of the log pair given by the centre and the ramification divisor is Kawamata log terminal.  This shows in particular that the pair is $\QQ$-Gorenstein. We now review the contents of this paper.

In Section 2, we discussion the singularities of the minimal model program and we recall the definitions and properties of log canonical covers.  In Section 3, we compute the centralizers of the canonical bimodule $\omega$ for a hereditary order over a DVR.  We also quote relevant results from~\cite{Kollar-Mori}.  This will be used in the proof of the main result.  In Section 4, we recall the properties of homologically homogeneous algebras that are finite over their centres.  This material is from~\cite{Stafford-VdB}.  
In Section 5, we prove the main result.  Given $\Delta$, a prime affine algebra that finite over its centre and homologically homogeneous, with centre $Z$ and discriminant $D$, we show that the centre of the canonical cover $\Gamma$ of the order $\Delta$ is the log canonical cover of the pair $(Z,\Delta)$.
Lastly, in Section 6, we list algebras to which the main theorem applies, and present some relevant examples.

There are many cases of singularities which are known to not admit a NCCR, for example see~\cite{Dao10}. 

The results of this paper suggest the following question.
\begin{question}  Let $Z$ be a commutative local ring.  Under what conditions is $Z$ the centre of a prime affine homologically homogeneous ring $A$?
 In other words, if  we allow the flexibility of adding a boundary divisor $D$, and allowing $A$ to have a non-trivial generic Brauer class, can we obtain a twisted ramified NCCR of the pair $(\Spec Z,D)$?
\end{question}

See~\ref{definition:twistedRamified} for the definitions of the relevant terms.
The results and a sketch of the proofs of this paper were presented in ~\cite{MSRItalk}.  The paper~\cite{DITW} works in the setting of NCCRs and Theorem 1.1(a) and Corollary 1.7 of \cite{DITW} prove our main results in that special case.

\section{The Singularities of Birational Geometry}

In this section we review some material about the singularities that
arise in the study of birational geometry and the minimal model
program.  Standard references include~\cite{ Clemens-Kollar-Mori, Kollar-Mori, Matsuki}.

\subsection{Kawamata log terminal, log terminal and canonical singularities}
Let $X$ be a normal variety.  A $\QQ$-{\it divisor} in $X$ is a linear
combination of codimension one subvarieties of $X$ with rational
coefficients.  A $\QQ$-divisor is {\it effective} if all its coefficients
are non-negative.
A $\QQ$-divisor $D$ is $\QQ$-{\it Cartier} if there is an integer $m$
so that $mD$ is a Cartier divisor.  A {\it log variety} is a pair
$(X,D)$ consisting of a normal
variety $X$ with a $\QQ$-divisor $D$.  

Let $(X,D)$ be a log variety such that $K_{X}+D$ is $\QQ$-Cartier.  In
this case we say that $(X,D)$ is $\QQ$-{\it Gorenstein.}
Then for a proper birational morphism $f:Y\to X$ with $Y$ smooth,
we write 
\[
K_{Y}=f^{*}(K_{X}+D)+\sum_{E}a_{E}E,\, a_{E}\in\QQ
\]
where $E$ runs over exceptional divisors and strict transforms of
prime divisors appearing in $D.$ The coefficient $a_{E}$ is independent
of the morphism $f$ and depends only on the divisorial valuation
of the function field associated to $E$.   The map $f:Y \to (X,D)$ is
called {\it crepant} if all $a_E=0,$ even in the case where $Y$ is only
normal and not
necessarily smooth.  We say that $(X,D)$ has
{\it Kawamata log terminal (klt) singularities} if $a_{E}>-1$ for every $E.$
When $(X,D)$ is $\QQ$-Gorenstein and $D=0,$ then $X$ is $\QQ$-Gorenstein. In this case, we say that
$X$ has {\it log terminal (resp. canonical) singularities} if $a_{E}>-1$
(resp. $\ge0$) for every $E.$  Note that $K_X$ is Cartier if and only
if $X$ is Gorenstein.  We recall the following well known results.
\begin{prop}[{\cite[Cor.~5.24]{Kollar-Mori}}]
\label{prop:rational iff canonical}Suppose that $K_{X}$ is Cartier.
Then $X$ has rational singularities if and only if $ $it has canonical
singularities.
\begin{prop}[{\cite[Th.~5.22]{Kollar-Mori}}]
\label{prop:klt implies rational}If $(X,D)$ has Kawamata log terminal
singularities with $D$ effective, then $X$ has rational singularities.
\end{prop}
\end{prop}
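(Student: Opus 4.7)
The strategy is to combine Hironaka's theorem with relative Kawamata-Viehweg vanishing, applied to an effective exceptional divisor manufactured from the klt discrepancies.

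First, I would fix a log resolution $f\colon Y\to X$ of the pair $(X,D)$, so that $Y$ is smooth and $\mathrm{Exc}(f)\cup f^{-1}(\mathrm{Supp}\,D)$ has simple normal crossings support. Writing the discrepancy equation
\[
K_Y + f^{-1}_*D \;=\; f^*(K_X+D) + \sum_E a_E E,
\]
where $E$ ranges over $f$-exceptional prime divisors, the klt hypothesis (with $D$ effective) gives $a_E>-1$ for every such $E$ and forces the coefficients of $D$, hence of $f^{-1}_*D$, to lie in $[0,1)$.

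Next, I would rearrange into the form required by relative Kawamata-Viehweg vanishing. Setting
\[
N := \sum_E \lceil a_E\rceil E,\qquad \Gamma := f^{-1}_*D + \sum_E \bigl(\lceil a_E\rceil - a_E\bigr) E,
\]
the divisor $N$ is integral, effective (since $-1<a_E<0$ forces $\lceil a_E\rceil=0$), and supported on the exceptional locus of $f$; while $\Gamma$ is an snc $\QQ$-divisor with $\lfloor\Gamma\rfloor=0$. The discrepancy equation rewrites as $K_Y+\Gamma=f^*(K_X+D)+N$, equivalently
\[
N - K_Y - \Gamma \;\sim_\QQ\; -f^*(K_X+D).
\]
The right-hand side is numerically $f$-trivial, hence $f$-nef; and because $f$ is birational with point generic fibre, every divisor on $Y$ is automatically $f$-big. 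Relative Kawamata-Viehweg vanishing therefore yields $R^if_*\cO_Y(N)=0$ for every $i>0$.

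Finally, I would promote this vanishing for $\cO_Y(N)$ to vanishing for $\cO_Y$ itself, which then establishes that $X$ has rational singularities. Since $N$ is effective and $f$-exceptional, this is a standard step that can be carried out by analysing the filtration $\cO_Y\subset\cO_Y(N)\subset\cO_Y(2N)\subset\cdots$ together with iterated application of relative vanishing, or alternatively by invoking Grauert-Riemenschneider vanishing together with Grothendieck duality (one could also appeal to Elkik's theorem after a cyclic-cover reduction). The principal technical obstacle is the second step: one must balance signs and fractional parts to secure simultaneously the effectiveness of $N$, the exceptionality of $N$, and the condition $\lfloor\Gamma\rfloor=0$. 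Each of these three properties is a direct consequence of the strict inequalities $a_E>-1$, which is precisely where the klt hypothesis---as opposed to merely log canonical---plays its decisive role.
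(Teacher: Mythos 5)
The paper does not prove these statements; it cites them directly from \cite[Cor.~5.24, Th.~5.22]{Kollar-Mori}, so there is no in-paper argument to compare against, and your proposal should be judged on its own. Two issues stand out. First, you only address the inner proposition (klt implies rational) and, as a special case $D=0$, the ``canonical $\Rightarrow$ rational'' half of the outer one. The converse direction ``rational $\Rightarrow$ canonical'' (which uses $K_X$ Cartier to make the discrepancies integral, then the identity $f_*\omega_Y=\omega_X\otimes f_*\cO_Y(\sum a_EE)$ and $R f_*\cO_Y=\cO_X$ via duality to force $\sum a_EE\ge 0$) is not touched, so the ``if and only if'' is not established.

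Second, and more seriously, the passage from $R^if_*\cO_Y(N)=0$ to $R^if_*\cO_Y=0$ is a real gap and the first repair you suggest does not work. The filtration $\cO_Y\subset\cO_Y(N)\subset\cO_Y(2N)\subset\cdots$ gives, from the short exact sequence $0\to\cO_Y\to\cO_Y(N)\to\cQ\to 0$, only a surjection $R^{i-1}f_*\cQ\twoheadrightarrow R^if_*\cO_Y$, which is useless; and one cannot iterate relative Kawamata--Viehweg for $\cO_Y(jN)$ because $(j-1)N-f^*(K_X+D)$ is not $f$-nef (an effective $f$-exceptional divisor is never $f$-nef unless zero, by the negativity lemma). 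The approach that actually closes the gap is your option (b): apply Grothendieck--Serre duality to $R f_*\cO_Y(N)\cong\cO_X$ to obtain $Rf_*\omega_Y(-N)\cong\omega_X^\bullet[-\dim X]$; combined with Grauert--Riemenschneider and the sandwich $f_*\omega_Y(-N)\subseteq f_*\omega_Y\subseteq\omega_X$, this forces $X$ to be Cohen--Macaulay with $f_*\omega_Y=\omega_X$, which by the Kempf criterion is exactly rationality. You gesture at this but do not carry it out, and it is the crux: without it the effective exceptional $N$ you constructed does you no good. (Your option (c), cyclic cover plus Elkik, is also viable but implicitly re-derives Prop.~\ref{prop:cover singularities} and Prop.~\ref{prop:rational iff canonical}, so it is not genuinely independent of the facts being proved.)
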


\subsection{Log canonical covers}

Next we recall the standard technique of log canonical covers (see
\cite[\S 8.5]{kawamata}\cite[\S 1.3]{prokhorov}).

Suppose now that a $\QQ$-Gorenstein log variety $(X,D)$ has its boundary divisor of
the form 
\[
D=\sum_{i}\frac{e_{i}-1}{e_{i}}D_{i},\, \, e_{i}\in\ZZ_{>0}
\]
with $D_{i}$ prime divisors. We say that $D$ has {\it standard
  coefficients}.  Let $m$ be the {\it index} of $(X,D),$ that
is, the least positive integer such that $m(K_{X}+D)$ is Cartier.
Given a coherent sheaf $\cF$ on $X$ we write $\cF^{(n)}:=(\cF^{\otimes
  n})^{**}$, the reflexive hull of the tensor power.
Locally on $X,$ fixing an isomorphism $\omega_{X}^{(m)}(mD)\cong\cO_{X}$,
we obtain an $\cO_{X}$-algebra 
\begin{equation}
\cA:=\bigoplus_{i=0}^{m-1}\omega_{X}^{(i)}\left(\left\lfloor iD\right\rfloor \right).\label{eq:log can cover}
\end{equation}
(In \cite{kawamata,prokhorov}, a slightly different description of the sheaf, $\bigoplus_{i=0}^{m-1}\cO_{X}\left(\left\lfloor -i(K_{X}+D)\right\rfloor \right)$,
is used, though it is isomorphic to $\cA$.)  The associated finite
morphism 
\[
\phi:\widetilde{X}:=\mathcal{S}\mathrm{pec}_{X}\,\cA\to X
\]
is called a log canonical cover and has the following properties:
\begin{prop}\cite[\S 8.5]{kawamata}\cite[\S 1.3]{prokhorov}
Let $(X,D)$ be a $\QQ$-Gorenstein log variety with log canonical cover
$\widetilde{X}$.  Then $\widetilde{X}$ is an irreducible normal variety, $K_{\widetilde{X}}$
is Cartier, and 
\[
K_{\widetilde{X}}=\phi^{*}(K_{X}+D).
\]
\end{prop}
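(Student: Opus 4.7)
The plan is to verify three assertions separately: (i) $\widetilde{X}$ is irreducible, (ii) $\widetilde{X}$ is normal, and (iii) $K_{\widetilde{X}}$ is Cartier and equals $\phi^{*}(K_{X}+D)$. Since $\cA$ is built out of local data (the chosen isomorphism $\omega_{X}^{(m)}(mD)\cong\cO_{X}$), I would work \'etale locally on $X$ and then glue. Away from the singular locus of $X$ and the support of $D$ the morphism $\phi$ is \'etale of degree $m$, so all the nontrivial checks concentrate in codimension one along the components $D_{i}$.

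For irreducibility, the generic fibre of $\phi$ is a degree-$m$ cyclic $k(X)$-algebra obtained by adjoining an $m$-th root of the trivialisation of $\omega_{X}^{(m)}(mD)$; because $m$ is the index of $(X,D)$, this trivialising rational function is not a $d$-th power in $k(X)$ for any proper divisor $d$ of $m$, so Kummer theory yields a field. For normality I would use that each $\omega_{X}^{(j)}(\lfloor jD\rfloor)$ is reflexive on the normal variety $X$, whence $\cA$ is reflexive and $\widetilde{X}$ is $S_{2}$. For $R_{1}$ the codimension-one points of $\widetilde{X}$ lie over codimension-one points of $X$; off $D$ the map is \'etale, and above the generic point of $D_{i}$ a direct computation identifies the stalk of $\cA$ as an Eisenstein extension $\cO_{X,D_{i}}[z]/(z^{e_{i}}-u\,t_{i})$ with $t_{i}$ a uniformiser and $u$ a unit, which is a DVR.

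For the formula I would apply Riemann--Hurwitz on the smooth locus. Writing $\widetilde{D}_{i}:=(\phi^{-1}D_{i})_{\mathrm{red}}$, the total ramification of order $e_{i}$ along $D_{i}$ gives a ramification divisor $R=\sum_{i}(e_{i}-1)\widetilde{D}_{i}$, and $\phi^{*}D_{i}=e_{i}\widetilde{D}_{i}$, so
\[
\phi^{*}D=\sum_{i}\frac{e_{i}-1}{e_{i}}\phi^{*}D_{i}=\sum_{i}(e_{i}-1)\widetilde{D}_{i}=R,
\]
and therefore $K_{\widetilde{X}}=\phi^{*}K_{X}+R=\phi^{*}(K_{X}+D)$ on the smooth locus, extending to all of $\widetilde{X}$ by normality. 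To promote $\QQ$-Cartier to Cartier I would invoke Grothendieck duality for the finite morphism $\phi$: $\phi_{*}\omega_{\widetilde{X}}\cong\mathcal{H}om_{\cO_{X}}(\cA,\omega_{X})$ as $\cA$-modules, which using the $\ZZ/m$-grading and the chosen isomorphism $\omega_{X}^{(m)}(mD)\cong\cO_{X}$ is identified with a graded shift of $\cA$ itself, hence locally free of rank one over $\cA$.

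The main obstacle, as I see it, is this last step: $mK_{\widetilde{X}}$ is Cartier for free, being the pullback of the Cartier divisor $m(K_{X}+D)$, but showing $K_{\widetilde{X}}$ itself is Cartier requires tracking the algebra structure carefully. The floor-function combinatorics in the multiplications $\cA_{i}\otimes\cA_{j}\to\cA_{i+j\bmod m}$, consistent precisely because $mD$ is integral once $m$ is divisible by every $e_{i}$, is what encodes the index-one property of the cover, and the Grothendieck-duality identification of $\omega_{\widetilde{X}}$ with a shift of $\cA$ is where this combinatorics does its work.
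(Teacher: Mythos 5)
The paper does not prove this proposition; it is quoted from Kawamata and Prokhorov, so there is no in-text argument to compare against. Your sketch follows the standard route for cyclic covers, and its backbone is sound, but a few of the local statements are imprecise enough that they would need repair in a full write-up.

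The codimension-one analysis is the main place to be careful. At the generic point of $D_{i}$ the stalk of $\cA$ has rank $m$ over the DVR $\cO_{X,D_{i}}$, not rank $e_{i}$; after an \'etale base change it is a product of $m/e_{i}$ copies of an Eisenstein extension $\cO[z]/(z^{e_{i}}-ut_{i})$, not a single one. The correct statement is that each local ring of $\widetilde{X}$ at a point lying over the generic point of $D_{i}$ is a DVR of ramification index $e_{i}$; your phrasing (``the stalk of $\cA$'') overstates what is true. Similarly, your irreducibility argument passes too quickly from ``$m$ is the index'' to ``the trivialising function is not a $d$-th power.'' The index condition involves two separate constraints, namely that $(m/d)D$ be an integral divisor and that $\omega_{X}^{(m/d)}((m/d)D)$ be locally trivial, and the first is not automatic from the second. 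The cleaner argument, after establishing normality, is that a nontrivial decomposition of $\widetilde{X}$ into irreducible components would exhibit a crepant cyclic cover of $X$ of some degree $m' < m$ with $m'$ dividing $m$ and with $m'D$ integral, and quotienting one component by the stabiliser of the $\mu_{m}$-action would contradict minimality of the index.

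The duality step is correct, and it is worth recording why: the degree-$j$ piece of $\mathcal{H}om_{\cO_{X}}(\cA,\omega_{X})$ is $\omega_{X}^{(j+1)}(\lceil jD\rceil)$, and the identity $\lceil jD\rceil=\lfloor(j+1)D\rfloor$, which holds precisely because $D$ has standard coefficients $(e_{i}-1)/e_{i}$, identifies this with $\cA_{j+1}$. So $\mathcal{H}om_{\cO_{X}}(\cA,\omega_{X})\cong\cA[1]$ as graded $\cA$-modules and $\omega_{\widetilde{X}}$ is invertible. This floor/ceiling identity is exactly ``where the combinatorics does its work,'' as you say, and it is the one computation I would insist on seeing spelled out rather than gestured at.
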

Note that the map  $\psi : \tilde{X} \rightarrow (X,D)$
is crepant by definition.

\begin{prop}
\label{prop:cover singularities}\cite[Prop.~5.20]{Kollar-Mori} With
the notation as above, $\widetilde{X}$ has canonical singularities if
and only if $(X,D)$ has Kawamata log terminal singularities.
\end{prop}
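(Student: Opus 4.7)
The plan is to compare discrepancies on $X$ and $\widetilde{X}$ by applying Riemann--Hurwitz to a compatible pair of resolutions. First I would pick a log resolution $g\colon Y\to X$ of $(X,D)$ and let $\widetilde{Y}$ be a resolution of the normalization of the fibre product $Y\times_{X}\widetilde{X}$. This produces a commutative square with $\widetilde{\phi}\colon\widetilde{Y}\to Y$ finite (of the same degree as $\phi$) and $\widetilde{g}\colon\widetilde{Y}\to\widetilde{X}$ birational; after allowing further blowups of $Y$, every divisorial valuation of $\widetilde{X}$ is realized as a prime divisor on such a $\widetilde{Y}$.

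Next I would extract the key discrepancy identity. Writing $K_{Y}=g^{*}(K_{X}+D)+\sum_{E}a_{E}E$, combining with the Riemann--Hurwitz formula
\[
K_{\widetilde{Y}}=\widetilde{\phi}^{*}K_{Y}+\sum_{F}(r_{F}-1)F,
\]
where $F$ runs over prime divisors of $\widetilde{Y}$ and $r_{F}$ is the ramification index of $\widetilde{\phi}$ at $F$, and using the crepant equality $K_{\widetilde{X}}=\phi^{*}(K_{X}+D)$ to rewrite $\widetilde{g}^{*}K_{\widetilde{X}}=\widetilde{\phi}^{*}g^{*}(K_{X}+D)$, one obtains
\[
a_{F}(\widetilde{X})+1 \;=\; r_{F}\bigl(a_{\widetilde{\phi}(F)}(X,D)+1\bigr)
\]
for every prime divisor $F$ on $\widetilde{Y}$.

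From this identity the equivalence is essentially numerical. Since $\phi\circ\widetilde{g}=g\circ\widetilde{\phi}$ is finite, a prime divisor $F$ on $\widetilde{Y}$ is exceptional for $\widetilde{g}$ if and only if its image $E:=\widetilde{\phi}(F)$ is exceptional for $g$. Because $K_{\widetilde{X}}$ is Cartier (by the preceding proposition) and $\widetilde{Y}$ is smooth, the discrepancy $a_{F}(\widetilde{X})$ is an integer, which forces $a_{E}(X,D)+1$ to lie in $\tfrac{1}{r_{F}}\ZZ$. Hence $a_{E}(X,D)+1>0$ if and only if $a_{E}(X,D)+1\geq 1/r_{F}$, equivalently $a_{F}(\widetilde{X})\geq 0$. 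Letting $g$ and $F$ vary so as to test every exceptional divisor over $\widetilde{X}$ and every exceptional divisor over $X$, this yields the desired equivalence of klt for $(X,D)$ with canonical for $\widetilde{X}$.

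The main obstacle is the bookkeeping around the fibre product: one must verify that $\widetilde{\phi}$ is étale in codimension one outside $g^{-1}$ of the support of $D$, that the normalized base change is sufficiently well-behaved for Riemann--Hurwitz to apply in the stated form with the expected integer ramification indices, and that the construction is functorial under further blowups of $Y$ so that the numerical equivalence really rules over every divisorial valuation of $\widetilde{X}$. Once these technical compatibilities are settled, the integrality of the upstairs discrepancy is precisely what converts the bare inequality $r_{F}(a_{E}+1)>0$ into $r_{F}(a_{E}+1)\geq 1$ and closes the proof.
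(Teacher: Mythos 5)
The paper gives no proof of this proposition: it is quoted verbatim as \cite[Prop.~5.20]{Kollar-Mori} and left as a citation. Your Riemann--Hurwitz comparison of discrepancies across the finite cover is exactly the standard argument for that result. The key identity
\[
a_{F}(\widetilde{X})+1 \;=\; r_{F}\bigl(a_{\widetilde{\phi}(F)}(X,D)+1\bigr),
\]
together with the integrality of $a_{F}(\widetilde{X})$ forced by $K_{\widetilde{X}}$ being Cartier, is what drives the equivalence, and you derive it correctly.

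One technical slip to flag: once you replace the normalization $\bar{Y}$ of $Y\times_{X}\widetilde{X}$ by a resolution $\widetilde{Y}$, the induced map $\widetilde{Y}\to Y$ is no longer finite, so Riemann--Hurwitz in the form you state does not apply to it. The cleaner route is to apply Riemann--Hurwitz to the finite morphism $\bar{\phi}\colon\bar{Y}\to Y$ itself. Since $\bar{Y}$ is normal and $K_{\widetilde{X}}$ is Cartier, $K_{\bar{Y}}-\bar{g}^{*}K_{\widetilde{X}}$ is a $\ZZ$-Weil divisor on $\bar{Y}$, which supplies the same integrality you invoked; smoothness of the upstairs model is not needed. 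Also note that $\widetilde{\phi}^{*}E=\sum_{F\mapsto E}r_{F}F$ is what distributes $a_{E}$ across the $F$'s, and finiteness of $\phi$ is what makes a prime divisor of $\bar{Y}$ exceptional for $\bar{g}$ exactly when its image in $Y$ is exceptional for $g$. Finally, since $\phi$ is a cyclic cover one can resolve $\widetilde{X}$ equivariantly and take the quotient to produce a suitable $Y$ realizing any prescribed divisorial valuation over $\widetilde{X}$, which is the point needed to quantify over all exceptional divisors. With those adjustments your argument agrees with the cited source.
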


\section{Centralizers of Powers of $\omega$}
Let $R$ be a DVR with parameter $t$. 
 We write  
$(t^i)=t^iR$ for $i \in \ZZ$,  the fractional ideal generated by $t^i$ and  
$(t^0)=(1)=R$.
Let $\Delta$ be a {\it standard hereditary order} over $R$ with ramification index $e,$ defined to be
$$\Delta = \begin{pmatrix} (1) & \cdots & \cdots & (1)\\
(t) & \ddots & \ddots & \vdots\\
\vdots & \ddots & \ddots & \vdots\\
(t) & \cdots & (t) & (1)
\end{pmatrix}
\subseteq R^{\,e \times e}.$$

The dualizing module of $\Delta$ is 
described as a bimodule by  $\omega=\Hom_R(\Delta,\omega_R)$ as described in 
\cite[Prop.~2.7]{Stafford-VdB}. We compute this bimodule 
by  forming the dual of $\Delta$ into $R$ component wise and taking the 
transpose.  Let $J$
be the Jacobson radical of $\Delta$, we have:
\[
\omega \simeq\begin{pmatrix}(1) & (t^{-1}) & \cdots & (t^{-1})\\
\vdots & \ddots & \ddots & \vdots\\
\vdots & \ddots & \ddots & (t^{-1})\\
(1) & \cdots & \cdots & (1)
\end{pmatrix}\text{ and }J = \begin{pmatrix}(t) & (1) & \cdots & (1)\\
\vdots & \ddots & \ddots & \vdots\\
\vdots & \ddots & \ddots & (1)\\
(t) & \cdots & \cdots & (t)
\end{pmatrix}.
\]
 Let
\[
y=\begin{pmatrix}0 & 1 & 0 &\cdots & 0\\
\vdots & \ddots & \ddots & \ddots & \vdots\\
\vdots & \ddots & \ddots & \ddots & 0\\
0 & \ddots & \ddots & \ddots & 1\\
t & 0 & \cdots & \cdots & 0
\end{pmatrix}
\]
Note that $y$ is a normal element in ${\Delta}$ and
$$J =y {\Delta}={\Delta} y \quad \quad y^{e}=t 
\quad \quad \omega =y^{1-e}{\Delta}=yt^{-1}{\Delta}.$$
An induction computation shows that 
\[
J^{i}=y^{i}{\Delta}=
\begin{pmatrix}
\left( t^{-\lfloor \frac{-i}{e} \rfloor} \right) & \cdots &
  \left( t^{-\lfloor \frac{e-1-i}{e} \rfloor} \right) \\
\vdots & \ddots & \vdots \\
\left( t^{-\lfloor \frac{-e+1-i}{e} \rfloor} \right) & \cdots & \ddots 
\end{pmatrix}
\]
We note that 
\[\omega^i=y^{-i(e-1)}{\Delta}=
\begin{pmatrix}
\left( t^{-\lfloor \frac{i(e-1)}{e} \rfloor} \right) & \cdots &
  \left( t^{-\lfloor \frac{(i+1)(e-1)}{e} \rfloor} \right) \\
\vdots & \ddots & \vdots \\
\left( t^{-\lfloor \frac{(i-1)(e-1)}{e} \rfloor} \right) & \cdots & \ddots 
\end{pmatrix}
\]
So the centralizer of the bimodule $\omega^i$ is 
\[ \Z(\omega^i) =\left( t^{-\lfloor \frac{i(e-1)}{e} \rfloor} \right). \]

Write $a^{m \times n}$ for the $m\times n$ matrix with entries all $a$.
We will use this notation for $a$ being an ideal or an element of a ring.

Given a sequence of natural numbers $[N]=[n_{1},\ldots,n_{e}]$, we 
let $\Delta$ be a {\it block hereditary order} given by
$$\Delta =
\begin{pmatrix}(1)^{n_{1}\times n_{1}} & \cdots & \cdots & (1)^{n_{1}\times n_{e}}\\
(t)^{n_{2}\times n_{1}} & \ddots & \ddots & \vdots\\
\vdots & \ddots & \ddots & \vdots\\
(t)^{n_{e}\times n_{1}} & \cdots & (t)^{n_{e}\times n_{e-1}} & (1){}^{n_{e}\times n_{e}}
\end{pmatrix}$$
which we
abbreviate as
\[
\Delta\simeq\begin{pmatrix} (1) & \cdots & \cdots & (1)\\
(t) & \ddots & \ddots & \vdots\\
\vdots & \ddots & \ddots & \vdots\\
(t) & \cdots & (t) & (1)
\end{pmatrix}^{[N]}\]
We note that $\Delta$ is Morita equivalent to the standard hereditary $R$-order with 
ramification index $e$.
Since we can multiply matrices by blocks, we have the following equations
$$J=(y\Delta)^{[N]}=(\Delta y)^{[N]} \quad \quad (y^e)^{[N]}=t
 \quad \quad \omega=(y^{1-e}\Delta)^{[N]} = (yt^{-1}\Delta)^{[N]}.$$
As above we get the following lemma.
\begin{lemma}\label{lem:cod1} Let $R$ be a DVR with parameter $t.$
Let $\Delta$ be a block hereditary order over with ramification index $e$.
Then the centralizer of powers of the dualizing module $\omega^i$
are given by
\[ \Z(\omega^i) =\left( t^{-\lfloor \frac{i(e-1)}{e} \rfloor} \right). \]
\end{lemma}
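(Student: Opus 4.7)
The plan is to reduce the block case to the standard hereditary case that has just been worked out in detail, using the Morita equivalence explicitly noted above. Both the block order $\Delta$ and the corresponding standard hereditary order $\Delta_0$ of ramification index $e$ share the same centre $R$, and the dualizing bimodule is built intrinsically (from $\Hom_R(-,\omega_R)$), so it transforms correctly under a Morita equivalence. The fractional ideal $\Z(\omega^i) \subseteq R$ — the scalar matrices that lie in $\omega^i$ — is a Morita invariant of the $\Delta$-bimodule $\omega^i$, hence agrees with the already computed value $(t^{-\lfloor i(e-1)/e \rfloor})$ for $\Delta_0$.

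If one prefers a direct argument (and this is probably the cleanest exposition given the block notation already in place), the plan is to verify that all the formulas from the standard case lift verbatim. The identities
\[
J=(y\Delta)^{[N]}=(\Delta y)^{[N]},\qquad (y^{e})^{[N]}=t,\qquad \omega=(yt^{-1}\Delta)^{[N]}
\]
are already recorded, so multiplying blockwise gives
\[
\omega^{i}=(y^{-i(e-1)}\Delta)^{[N]},
\]
whose matrix presentation has the same entries as in the standard case, each scalar entry $(t^{a})$ simply replaced by the block $(t^{a})^{n_{j}\times n_{k}}$. A scalar matrix $\lambda I$ (with $I$ the identity of size $\sum n_{i}$) belongs to $\omega^{i}$ iff $\lambda$ lies in every diagonal entry of that matrix, and the largest diagonal ideal is $(t^{-\lfloor i(e-1)/e \rfloor})$ by the computation already carried out.

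There is no substantial obstacle: the block structure was arranged precisely so that the reduction is formal. The only point requiring a brief check is that the diagonal blocks of $\omega^{i}$ (where $\lambda I$ has to land) really are constrained by the same exponent $-\lfloor i(e-1)/e\rfloor$ as on the diagonal in the standard case — this follows immediately because blocks on the diagonal of $\omega^{i}$ correspond to diagonal entries of the standard matrix presentation.
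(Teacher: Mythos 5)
Your direct argument is exactly the paper's proof: the paper records the blockwise identities $J=(y\Delta)^{[N]}$, $(y^{e})^{[N]}=t$, $\omega=(yt^{-1}\Delta)^{[N]}$ and then states ``as above we get the following lemma,'' which is precisely your computation that $\omega^{i}=(y^{-i(e-1)}\Delta)^{[N]}$ has diagonal block entries $(t^{-\lfloor i(e-1)/e\rfloor})$, so a scalar matrix $\lambda I$ lies in $\omega^{i}$ iff $\lambda\in(t^{-\lfloor i(e-1)/e\rfloor})$. (A small slip: you write ``the largest diagonal ideal,'' but in fact all diagonal entries of $\omega^{i}$ coincide, which is what makes the conclusion immediate; one should really say that the relevant ideal is the intersection of the diagonal ideals, and these are all equal.) Your alternative via Morita equivalence, noting that $\Z(\omega^{i})=\Hom_{\Delta\text{-bimod}}(\Delta,\omega^{i})$ is Morita invariant and compatible with the embedding of the common centre $R$ into $K$, is also valid and offers a conceptual shortcut the paper does not take, at the modest cost of checking that the identification respects the fractional-ideal structure inside $K$.
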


\section{Noncommutative Resolutions or Algebraic set up}

We firstly recall some results and notation from \cite{Stafford-VdB}.  Recall that a $k$-algebra is {\it affine} if it is finitely generated as an algebra.
We let $\Delta$ be an affine prime $k$-algebra which is finitely generated
as a module over its centre $Z:=\Z(\Delta)$. Note that $Z$ is an
integral domain~\cite[\S 10, Ex.~1]{Lam}, and we will write $K$ for its fraction
field. Let $A$ be the simple Artinian
ring of fractions of $\Delta.$ We have that $A$ is a central simple
$K$-algebra.

We say $\Delta$ is a \emph{tame }$Z$-order
if $\Delta$ is a finitely generated, reflexive $Z$ module and $\Delta_{p}$
is hereditary for all codimension one primes $p$ of $Z$. We say
that $\Delta$ is {\it homologically homogeneous} if the projective dimensions
of all simple $\Delta$ modules are equal. We recall the following
result, mostly due to Brown-Hajarnavis \cite{Brown-Hajarnavis}, which we
cite from \cite{Stafford-VdB}.
\begin{prop}
\cite[Th.~2.3]{Stafford-VdB} Let $\Delta$ be an affine homologically homogeneous
order of dimension $d$ with centre $Z$, then
\begin{enumerate}
\item $\Delta$ is a Cohen-Macaulay module over $Z.$
\item $\gldim \Delta=\GKdim\Delta=d$.
\item $Z$ is an affine Cohen-Macaulay normal domain.
\item $\Delta$ is a tame $Z$-order.
\end{enumerate}
\end{prop}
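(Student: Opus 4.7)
The plan is to unpack the Brown--Hajarnavis structure theory for homologically homogeneous rings, reducing each of the four conclusions either to a localization argument on $Z$ or to a standard descent along the finite extension $Z \hookrightarrow \Delta$. My preferred order is: derive (2) first, feed it into (1), then get the affineness and Cohen--Macaulay parts of (3) by descent, and close with (4) using the local hereditary structure at height-one primes, which in turn supplies the missing normality statement in (3).

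For (2), the common projective dimension of the simples gives $\gldim \Delta$ directly. I would identify this number with $d$ by localizing at each maximal ideal $m$ of $Z$, applying the Auslander--Buchsbaum formula to the semilocal noetherian ring $\Delta_m$, which is module-finite over the local ring $Z_m$, and using that for an affine PI algebra finite over its centre one has $\GKdim \Delta = \dim Z = d$. The same local computation yields (1): $\mathrm{depth}_{Z_m} \Delta_m = \dim Z_m$ for every maximal $m$, which is the maximal Cohen--Macaulay condition on $\Delta$ as a $Z$-module.

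For (3), affineness of $Z$ follows from the Artin--Tate lemma applied to $Z \subseteq \Delta$, while the Cohen--Macaulay property of $Z$ comes from (1): a faithful maximal Cohen--Macaulay module over a noetherian local ring forces the base ring to have depth equal to its dimension. Normality I would defer to after (4): by Serre's criterion a CM ring is normal if and only if it satisfies $R_1$, and the $R_1$ condition at a height-one prime $p$ of $Z$ falls out of the local analysis of $\Delta_p$.

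For (4), reflexivity of $\Delta$ over $Z$ follows from the MCM property over the eventually normal CM ring $Z$. For the hereditary structure at a height-one prime $p$ of $Z$, localize: by Auslander--Buchsbaum applied to simples of $\Delta_p$, the projective dimension of each simple equals $\mathrm{depth}_{Z_p} \Delta_p = \dim Z_p = 1$, so $\gldim \Delta_p = 1$, and a noetherian PI ring of global dimension one finite over its one-dimensional local centre is classically a hereditary order, which at the same time forces $Z_p$ to be a DVR. The main obstacle is precisely this interleaving of (3) and (4): one would like normality of $Z$ in order to state the tame order condition cleanly, but normality only drops out after the height-one local structure is in place, so the proof must be bootstrapped by analyzing $\Delta_p$ and $Z_p$ simultaneously at each height-one prime.
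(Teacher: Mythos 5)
The paper does not prove this proposition at all: it is quoted verbatim as \cite[Th.~2.3]{Stafford-VdB}, with the remark that it is mostly due to Brown--Hajarnavis, and the only added comment is that part~(3) requires characteristic zero. So there is no ``paper's own proof'' to compare against, and your sketch should be judged on its own merits as a reconstruction of the Brown--Hajarnavis argument.

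As a reconstruction, the skeleton and the ordering (2)\,$\to$\,(1)\,$\to$\,(3 minus normality)\,$\to$\,(4)\,$\to$\,normality are broadly right, and several steps are clean: Artin--Tate for affineness of $Z$; the observation that a module-finite extension $Z\subseteq\Delta$ with $\Delta$ MCM over $Z$ forces $Z$ to be CM (any $Z$-regular sequence on $\Delta$ inside $m_Z$ is automatically $Z$-regular because $Z\hookrightarrow\Delta$); and the use of $\mathrm{gldim}\,\Delta=\sup_S\mathrm{pd}\,S$ over simples to read off the global dimension from homological homogeneity. However, three steps are being waved through that are genuine theorems, not formalities. First, the noncommutative Auslander--Buchsbaum identity for $\Delta_m$ over $Z_m$ is invoked repeatedly but is itself a nontrivial result for module-finite algebras over local rings and needs a citation or proof. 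Second, and more seriously, the claim that ``a noetherian PI ring of global dimension one finite over its one-dimensional local centre\ldots forces $Z_p$ to be a DVR'' does not simply fall out: finite global dimension of $\Delta$ does not control $\mathrm{gldim}\,Z$ (the entire point of the surrounding paper is that $Z$ is usually singular when $\dim Z\ge 2$), and what you actually need is the structure theorem that a bounded hereditary noetherian prime ring finite over its centre has a Dedekind centre. That is a real input from HNP theory (Eisenbud--Robson, Lenagan, Harada) and is the crux of both $R_1$ and the tame-order statement. Third, the paper explicitly flags that part~(3) depends on $k$ having characteristic zero; your sketch never uses the characteristic, which means either you have silently smuggled in a separability or reduced-trace argument (the usual place characteristic zero enters, to split $Z$ off $\Delta$ or to get the \'etale local form in codimension one), or the sketch as written would purport to prove a false statement in characteristic~$p$. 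Making explicit where the characteristic is used would both close this gap and align the argument with the paper's caveat.
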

Note that the third conclusion depends on our base field $k$ having
characteristic zero.

We will now define the discriminant of a tame order in codimension one. Let
$p$ be a codimension one prime of $Z$. Since $\Delta_p$ is hereditary,
there is an \'etale extension $R$ of the DVR $Z_{p}$ so that $\Delta_{R}:=\Delta\otimes R$
is isomorphic to 
\[
\Delta_{R}\simeq\begin{pmatrix}(1)^{n_{1}\times n_{1}} & \cdots & \cdots & (1)^{n_{1}\times n_{e_{p}}}\\
(t)^{n_{2}\times n_{1}} & \ddots & \ddots & \vdots\\
\vdots & \ddots & \ddots & \vdots\\
(t)^{n_{e_{p}}\times n_{1}} & \cdots & (t)^{n_{e_{p}}\times n_{e_{p}-1}} & (1){}^{n_{e_{p}}\times n_{e_{p}}}
\end{pmatrix}
\]
where $t$ is a parameter of $R$ ~\cite{Artin,Reiner}. We will write $[N]=[n_{1},\ldots,n_{e_p}]$
and abbreviate as
\[
\Delta_{R}\simeq\begin{pmatrix} (1) & \cdots & \cdots & (1)\\
(t) & \ddots & \ddots & \vdots\\
\vdots & \ddots & \ddots & \vdots\\
(t) & \cdots & (t) & (1)
\end{pmatrix}^{[N]} 
\]
We call $e_{p}$ the {\it ramification index} of $\Delta$ at $p$.
We define
the {\it discriminant} of $\Delta$ to be the $\QQ$-divisor 

\[
D=\sum_{\begin{array}{c}
p\subset Z\\
\cod  p=1
\end{array}}\left(\frac{e_{p}-1}{e_{p}}\right)D_{p}.
\]
Note that this sum is finite since $\left\lceil D\right\rceil $ is the
classical discriminant
 of the order $\Delta$ which is defined via the reducued trace pairing~\cite[\S 25]{Reiner}.
 We will
combine the centre of $\Delta$ with its discriminant into the {\it log
centre,} $\LZ(\Delta)=(\Spec \Z(\Delta),D)$.

Suppose $\Delta$ is an affine prime algebra that is a finite module over its centre.  We have~\cite[Lemma 2.4]{Stafford-VdB} that $\Delta$ is homologically homogeneous
if and only if $\Delta$ is a Cohen-Macaulay $R$-module and $\GKdim \Delta = \gldim \Delta$.

\begin{definition} \label{definition:twistedRamified} Let $\Delta$ be a prime, affine $k$-algebra, module finite over its centre.  If $\Delta$ is homologically homogeneous, or equivalently
  by~\cite[Lemma~2.4]{Stafford-VdB}, the algebra $\Delta$ is a Cohen-Macaulay with $\GKdim \Delta = \gldim \Delta$, then we say that $\Delta$ is a {\it twisted ramified noncommutative crepant resolution (NCCR) of the pair} $\LZ(\Delta) = (X,D).$  If furthermore
  $D = 0$ then we say the NCCR is {\it unramified.} Let $K$ be the fraction field of the centre $Z$.
The twist of a twisted NCCR $\Delta$ is the class of $\Delta\otimes_Z K$ in $\Br K.$  If this is trivial, then we say $\Delta$ is an untwisted NCCR of the pair $\LZ(\Delta)$.  A {\it classical} NCCR is untwisted and unramified, and this the usual defintion of an NCCR in the literature.
  \end{definition}

Note that in the case that $Z(\Delta)$ is Gorenstein, our definition of a classical NCCR is equivalent to the definition of an NCCR in~\cite[Defn.~2.2]{VdBICM} and our definition of twisted unramified NCCR is equivalent to the definition of a twisted NCCR in~\cite[Defn.~2.2]{VdBICM}.

The next statement is well known.
\begin{prop} If $\Delta$ is a maximal order and a classical NCCR of the pair $(Z,D)$ then $D=0$ and there is a reflexive $Z$-module $M$ so that $\Delta \simeq \End_Z(M)$.
\end{prop}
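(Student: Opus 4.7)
The first conclusion, $D=0$, is immediate from Definition~\ref{definition:twistedRamified}: a \emph{classical} NCCR means \emph{untwisted} and \emph{unramified}, and unramified means that the discriminant divisor is zero. The substance of the proposition is therefore to produce a reflexive $Z$-module $M$ with $\Delta\cong\End_Z(M)$.

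The key input is that $\Delta$ is untwisted, so $A:=\Delta\otimes_Z K\cong\End_K(V)$ for a finite-dimensional $K$-vector space $V$, which is the unique simple $A$-module (up to isomorphism). I would construct $M$ as a reflexive $\Delta$-lattice in $V$: pick any nonzero $v\in V$ (so that $Av=V$), form the finitely generated $Z$-submodule $\Delta v\subset V$, and let $M$ be the $Z$-reflexive hull $(\Delta v)^{**}$, which we may realise inside $V$ as the saturation $\bigcap_{\mathrm{ht}(p)=1}(\Delta v)_p$. Since each localisation $(\Delta v)_p$ is $\Delta$-stable in $V$, and intersections of $\Delta$-stable submodules are $\Delta$-stable, $M$ is a reflexive $Z$-module which is also a $\Delta$-module, with $M\otimes_Z K=V$.

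The left action then gives a $Z$-algebra homomorphism $\Delta\to\End_Z(M)$, which is injective because $V$ is a faithful $A$-module. Both $\Delta$ and $\End_Z(M)$ are $Z$-orders in $A$: $\End_Z(M)$ is a finitely generated $Z$-module because $Z$ is Noetherian and $M$ is finitely generated, and $\End_Z(M)\otimes_Z K=\End_K(V)=A$ by flatness of localisation together with finite presentation of $M$. Since $\Delta$ is a maximal $Z$-order in $A$, the inclusion $\Delta\subseteq\End_Z(M)$ must be an equality, yielding $\Delta\cong\End_Z(M)$. The only step requiring real care is checking that the reflexive hull inherits the $\Delta$-action and produces a finitely generated module over $Z$; once that technicality is in place, the maximality argument closes the proof.
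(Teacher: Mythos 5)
Your proof is correct and follows essentially the same idea as the paper's, just made more explicit. The paper's own argument is very terse: ``Since the generic Brauer class of $\Delta$ is trivial and $Z$ is a normal integral domain, the above statement is true in codimension one, and so taking the reflexive hull gives us the result.'' You supply the same ingredients — trivial Brauer class to identify $A \cong \End_K(V)$, a $\Delta$-stable reflexive $Z$-lattice $M$ in $V$ obtained as a reflexive hull, and then an order-comparison — while filling in the details (the explicit construction $M = (\Delta v)^{**}$, the verification that $\End_Z(M)$ is a $Z$-order in $A$, and the use of maximality of $\Delta$ to force the inclusion $\Delta \subseteq \End_Z(M)$ to be an equality). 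One mild observation: you appeal to maximality at the end, whereas the paper's phrasing suggests concluding via the fact that two reflexive modules over a normal domain that agree in codimension one are equal; both are valid ways to close the argument, and both ultimately rest on the codimension-one analysis plus reflexivity.
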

\begin{proof}
  Since the generic Brauer class of $\Delta$ is trivial and $Z$ is a normal integral domain, the above statement is true in codimension one, and so taking the reflexive hull gives us the result.
\end{proof}

\section{Noncommutative Crepant Resolutions and Canonical Covers}
\begin{prop}
\label{prop:center omega}Let $\Gamma=\Delta\oplus\omega_{\Delta}\oplus\cdots\oplus\omega_{\Delta}^{( r-1)}$
be the canonical cover of $\Delta$ and put $\widetilde{Z}:=Z(\Gamma)$
and $\widetilde{X}:=\Spec{}\widetilde{Z}.$ Then the morphism $\widetilde{X}\to X$
is a log canonical cover of $(X,D)$. \end{prop}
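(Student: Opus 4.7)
The plan is to show directly that $\widetilde{Z} := Z(\Gamma)$ is isomorphic as an $\cO_{X}$-algebra to the algebra $\cA = \bigoplus_{i=0}^{m-1}\omega_{X}^{(i)}(\lfloor iD\rfloor)$ of equation~(\ref{eq:log can cover}), whose relative Spec defines the log canonical cover of $(X,D)$. The starting observation is that $\Gamma = \bigoplus_{i=0}^{r-1}\omega_{\Delta}^{(i)}$ is a decomposition of $\Delta$-bimodules sitting inside the simple artinian ring of fractions $A$. Since the centre of $A$ is $K$, taking centres commutes with this direct sum and yields
\[
\widetilde{Z} \;=\; \bigoplus_{i=0}^{r-1} Z(\omega_{\Delta}^{(i)}),
\]
where $Z(\omega_{\Delta}^{(i)}) = \omega_{\Delta}^{(i)}\cap K$ is a fractional $Z$-ideal. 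The crux of the proof is therefore the $Z$-module identification $Z(\omega_{\Delta}^{(i)}) \cong \omega_{X}^{(i)}(\lfloor iD\rfloor)$ for each $i$, after which the algebra structures and the equality $r=m$ must be checked.

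Both sides are reflexive $Z$-modules, so it suffices to verify the isomorphism in codimension one. Fix a prime $p\subset Z$ with $\cod p = 1$ and ramification index $e_p$, and pass to the étale extension $R$ of $Z_p$ over which $\Delta_R$ acquires the block hereditary form of Section~3. By Lemma~\ref{lem:cod1},
\[
Z(\omega_{\Delta}^{(i)})\otimes_{Z_p} R \;\cong\; \bigl(t^{-\lfloor i(e_p-1)/e_p\rfloor}\bigr),
\]
which is precisely the local description of $\omega_{X}^{(i)}(\lfloor iD\rfloor)$ over $R$, since $\omega_X$ is locally trivial at the regular codimension-one point $p$ and the coefficient of $D_p$ in $D$ is $(e_p-1)/e_p$. Étale descent gives the identification at every codimension-one point of $X$, and reflexivity on both sides promotes this to a global isomorphism of $Z$-modules.

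It remains to match algebra structures and to verify $r=m$. The bimodule multiplications $\omega_{\Delta}^{(i)}\otimes_{\Delta}\omega_{\Delta}^{(j)} \to \omega_{\Delta}^{(i+j)}$ restrict on central elements to maps that, under the identification above, correspond to the natural pairings $\omega_{X}^{(i)}(\lfloor iD\rfloor)\otimes \omega_{X}^{(j)}(\lfloor jD\rfloor) \to \omega_{X}^{(i+j)}(\lfloor (i+j)D\rfloor)$ defining $\cA$; again this is a codimension-one check, tractable because everything becomes a rank-one fractional ideal over $R$. Applying the module identification at $i = r$ gives $\omega_{X}^{(r)}(rD) \cong Z(\omega_{\Delta}^{(r)}) \cong Z(\Delta) = \cO_X$, forcing $r(K_X+D)$ to be Cartier so $m\mid r$, while minimality of $r$ in the definition of $\Gamma$ together with the reverse implication (any smaller isomorphism $\omega_{X}^{(m)}(mD)\cong\cO_X$ would produce a bimodule isomorphism $\omega_{\Delta}^{(m)}\cong\Delta$ through the same codimension-one dictionary) gives equality. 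The main obstacle I anticipate is the algebra-structure matching: one must track how the local trivialization of $\omega_{X}^{(m)}(mD)$ used to define $\cA$ in~(\ref{eq:log can cover}) compares with the intrinsic multiplication on $\Gamma$, which is essentially a bookkeeping task but requires care to ensure the codimension-one identifications glue into a product-preserving isomorphism of sheaves of algebras.
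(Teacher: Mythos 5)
Your proposal follows essentially the same strategy as the paper's proof: compute $Z(\omega_{\Delta}^{(i)})$ étale-locally in codimension one via Lemma~\ref{lem:cod1}, identify it with $\omega_{X}^{(i)}(\lfloor iD\rfloor)$ there, and extend to a global isomorphism by reflexivity, then sum over $i$. You also flag the equality $r=m$ and the compatibility of algebra structures, points the paper leaves implicit, but these are refinements of the same argument rather than a different route.
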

\begin{proof}
Let $p$ be a codimension one
prime of $Z$. By \cite{Artin}, there is an \'etale extension $R$
of $\Delta_{p}$ so that $\Delta_{R}:=\Delta\otimes R$ is isomorphic
to $$ \Delta_{R}\simeq\begin{pmatrix}R^{n_{1}\times n_{1}} & \cdots & \cdots & R^{n_{1}\times n_{k}}\\
(tR)^{n_{2}\times n_{1}} & \ddots & \ddots & \vdots\\
\vdots & \ddots & \ddots & \vdots\\
(tR)^{n_{k}\times n_{1}} & \cdots & (tR)^{n_{k-1}\times n_{k}} & R{}^{n_{k}\times n_{k}}
\end{pmatrix} $$
where $t$ is a parameter of the DVR $R$.  Now Lemma~\ref{lem:cod1} shows that 
\[\Z(\omega_{\Delta_R}^i) =\left( t^{-\lfloor \frac{i(e-1)}{e} \rfloor} \right)\]
and so \[
Z(\omega_{\Delta}^{( i)})=\omega_{X}^{(i)}(\left\lfloor iD\right\rfloor ).
\] \'etale locally in codimension one. Both sides are naturally isomorphic at the generic point, so we obtain this isomorphism globally since both sides are reflexive.  It follows immediately that
$$\widetilde{Z} = \Z(\Gamma) = \bigoplus_{i=0}^{m-1} \Z(\omega_{\Delta}^{( i)})= \bigoplus_{i=0}^{m-1}\omega_{X}^{(i)}(\left\lfloor iD\right\rfloor )$$
where $m$ is the index of $K_X+D$.

\end{proof}
Let $\Delta$ be an affine prime $k$-algebra that is module finite over its centre.  Suppose further that $\Delta$ is homologically homogeneous. Let
$Z:=\Z(\Delta)$ its centre and $X:=\Spec{}Z$ its spectrum. Stafford
and Van den Bergh proved an important result about the possible singularities
of the centre:
\begin{thm}[\cite{Stafford-VdB}]
\label{thm:stafford-vdb} $X$ has only rational singularities.
\end{thm}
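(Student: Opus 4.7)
The plan is to reduce to the Gorenstein setting by passing to the canonical cover and then to descend. First I would form $\Gamma = \Delta \oplus \omega_\Delta \oplus \cdots \oplus \omega_\Delta^{(r-1)}$, set $\widetilde Z = Z(\Gamma)$ and $\widetilde X = \Spec \widetilde Z$, and invoke Proposition \ref{prop:center omega} to identify $\phi \colon \widetilde X \to X$ with the log canonical cover of $(X,D)$. Then $K_{\widetilde X}$ is Cartier, so $\widetilde X$ is Gorenstein.

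Next I would verify that $\Gamma$ inherits the working hypotheses from $\Delta$: it is an affine prime $k$-algebra, finitely generated as a module over $\widetilde Z$, Cohen-Macaulay, and of finite global dimension equal to $\dim \widetilde Z = \dim Z$. Most of this is formal from the construction of $\Gamma$ as a bimodule direct sum and the corresponding facts for $\Delta$; the local structure of $\omega_\Delta^{(i)}$ obtained from Lemma \ref{lem:cod1} provides the control on codimension-one behaviour needed to run the argument, and indeed should show that $\Gamma$ is unramified as an order on $\widetilde X$ since the bimodule pieces were assembled in precisely the proportions that trivialize the ramification of $\Delta$ along each $D_p$.

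With $\widetilde X$ Gorenstein and $\Gamma$ an affine prime homologically homogeneous algebra over $\widetilde Z$, I would then invoke the core Stafford--Van den Bergh argument: for a resolution $\pi \colon Y \to \widetilde X$, the combination of the Cohen-Macaulay property and finite global dimension of $\Gamma$, together with local duality applied to its bimodule dualizing complex, yields $R^i \pi_* \cO_Y = 0$ for $i>0$, so $\widetilde X$ has rational singularities. Finally I would descend along the finite map $\phi$: in characteristic zero the normalized trace splits $\cO_X \hookrightarrow \phi_* \cO_{\widetilde X}$, making $\cO_X$ a direct summand, so higher-direct-image vanishing for a resolution of $\widetilde X$ transfers to any resolution of $X$ which factors through it.

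The main obstacle is the third step, which is the homological heart of the Stafford--Van den Bergh argument. The advantage of routing through $\widetilde X$ is that $K_{\widetilde X}$ is Cartier, so by Proposition \ref{prop:rational iff canonical} rational singularities coincide with canonical singularities on $\widetilde X$, and one sidesteps the subtleties caused by the boundary $D$ and by the non-trivial ramification of $\Delta$.
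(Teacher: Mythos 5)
This theorem is not proved in the paper: it is quoted verbatim from \cite{Stafford-VdB} and used as a black box (in particular, it is applied to the algebra $\Gamma$ in the proof of Theorem~\ref{thm:main}). So there is no ``paper's own proof'' to compare against, only a citation.

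That said, your sketch does reproduce the overall shape of the Stafford--Van den Bergh argument: replace $\Delta$ by its canonical cover $\Gamma$ so that $\omega_\Gamma \cong \Gamma$ as bimodules and the centre $\widetilde{Z}$ becomes Gorenstein, prove rationality of $\widetilde{X} = \Spec\widetilde{Z}$ in this normalized situation, and then descend along the finite split morphism $\phi\colon \widetilde{X}\to X$ using the trace (the direct-summand transfer of rational singularities in characteristic zero is standard). Steps~1, 2, and~4 are essentially correct, and your observation that passing to $\widetilde{X}$ lets one invoke Proposition~\ref{prop:rational iff canonical} to identify rational with canonical singularities is exactly the reason the construction is useful. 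One small imprecision: what the cover trivializes is not the ramification of $\Delta$ but the dualizing bimodule, i.e.\ $\omega_\Gamma\cong\Gamma$; that $\Gamma$ becomes unramified over $\widetilde{Z}$ in codimension one is a consequence in the hereditary case but is not the mechanism the argument runs on.

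The genuine gap is step~3, which you describe as ``the homological heart'' but do not actually carry out. Asserting that Cohen--Macaulayness plus finite global dimension plus ``local duality applied to the bimodule dualizing complex'' yields $R^i\pi_*\mathcal{O}_Y=0$ is precisely the content of the theorem in the case $\omega_\Delta\cong\Delta$, and it is by far the hardest part of Stafford and Van den Bergh's paper. As written, the proposal either leaves that step unproved, or---if one reads ``the core Stafford--Van den Bergh argument'' as meaning Theorem~\ref{thm:stafford-vdb} itself applied to $\Gamma$---it is circular, since you would be invoking the statement you set out to prove. A complete proof would have to supply the actual derived-category/duality argument that Stafford and Van den Bergh give in the Gorenstein case; naming it does not prove it.
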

In fact, we can prove the following along the same line, which is
a refinement of their result because of Proposition \ref{prop:klt implies rational}.
\begin{thm}\label{thm:main}
Let $D$ be the discriminant divisor on $X$ associated to $\Delta.$
Then the log variety $(X,D)$ has Kawamata log terminal singularities.
Moreover, if $X$ is $\QQ$ -Gorenstein, then $X$ has log terminal
singularities. \end{thm}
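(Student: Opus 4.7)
The plan is to reduce the statement to Stafford--Van den Bergh's theorem \ref{thm:stafford-vdb} via the log canonical cover machinery established in Section 2 together with Proposition \ref{prop:center omega}. More precisely, I want to produce a noncommutative algebra $\Gamma$ whose centre is the log canonical cover $\widetilde{X}$ of $(X,D)$, verify that $\Gamma$ again satisfies the hypotheses of \ref{thm:stafford-vdb}, conclude that $\widetilde{X}$ has rational singularities, upgrade rational to canonical using that $\widetilde{X}$ is Gorenstein, and finally translate canonical on $\widetilde{X}$ to klt on $(X,D)$ via Proposition \ref{prop:cover singularities}.

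In detail, let $m$ denote the index of $(X,D)$. Form the canonical cover algebra
\[
\Gamma \;=\; \Delta \oplus \omega_{\Delta} \oplus \cdots \oplus \omega_{\Delta}^{(m-1)}
\]
as in Proposition \ref{prop:center omega}. That proposition identifies $\widetilde{Z}:=\Z(\Gamma)$ with the $\cO_X$-algebra $\cA$ from \eqref{eq:log can cover}, so $\widetilde{X}:=\Spec\widetilde{Z}\to X$ is the log canonical cover of $(X,D)$; in particular $\widetilde{X}$ is normal, $K_{\widetilde{X}}$ is Cartier, and $K_{\widetilde{X}}=\phi^{*}(K_{X}+D)$. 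Next I would check that $\Gamma$ inherits homological homogeneity from $\Delta$: since $\omega_{\Delta}$ is a reflexive invertible $(\Delta,\Delta)$-bimodule (so each summand $\omega_{\Delta}^{(i)}$ is a maximal Cohen--Macaulay $Z$-module of the same rank as $\Delta$), $\Gamma$ is module-finite over its centre $\widetilde{Z}$, Cohen--Macaulay as a $\widetilde{Z}$-module, and its global dimension equals its Krull dimension $\dim\widetilde{Z}=\dim Z$. By the Stafford--Van den Bergh criterion cited just before Definition \ref{definition:twistedRamified}, $\Gamma$ is homologically homogeneous.

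Now I would apply Theorem \ref{thm:stafford-vdb} to $\Gamma$ in place of $\Delta$: this yields that $\widetilde{X}$ has rational singularities. Combined with $K_{\widetilde{X}}$ being Cartier, Proposition \ref{prop:rational iff canonical} promotes this to canonical singularities on $\widetilde{X}$. Proposition \ref{prop:cover singularities} then concludes that $(X,D)$ has Kawamata log terminal singularities, proving the main assertion.

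For the "moreover" part, assume $X$ is $\QQ$-Gorenstein; I want to deduce log terminal singularities for $X$ itself. Fix a log resolution $f:Y\to X$ with exceptional divisors $E$, and write
\[
K_{Y} \;=\; f^{*}(K_{X}+D) + \sum_{E} a_{E}E \;=\; f^{*}K_{X} + \sum_{E} b_{E}E,
\]
so that $b_{E}=a_{E}+m_{E}$, where $m_{E}\ge 0$ is the coefficient of $E$ in the effective divisor $f^{*}D$. The klt conclusion just obtained gives $a_{E}>-1$, hence $b_{E}>-1$, so $X$ has log terminal singularities.

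The step I expect to be the main obstacle is the verification that $\Gamma$ is homologically homogeneous, i.e.\ that the global dimension does not blow up when passing from $\Delta$ to the reflexive algebra $\Gamma$. Once this is in hand, everything else is formal: Proposition \ref{prop:center omega} does the geometric identification, \ref{thm:stafford-vdb} provides rational singularities, and the two general statements \ref{prop:rational iff canonical} and \ref{prop:cover singularities} carry out the translation back to $(X,D)$.
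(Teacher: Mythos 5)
Your strategy matches the paper's first proof of Theorem~\ref{thm:main} essentially line for line: form the canonical cover $\Gamma$, identify $Z(\Gamma)=\widetilde Z$ with the log canonical cover via Proposition~\ref{prop:center omega}, apply Theorem~\ref{thm:stafford-vdb} to $\Gamma$ to get rational singularities on $\widetilde X$, upgrade to canonical by Proposition~\ref{prop:rational iff canonical}, and descend to klt on $(X,D)$ by Proposition~\ref{prop:cover singularities}. Your discrepancy comparison for the ``moreover'' clause (writing $b_E = a_E + m_E$ with $m_E\ge 0$ coming from $f^*D$) is also correct and spells out what the paper dismisses as straightforward.

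The one genuine gap is exactly the step you flag at the end: you assert that $\gldim\Gamma$ equals the Krull dimension, but the facts you offer in support (that $\omega_\Delta$ is an invertible reflexive bimodule, so each $\omega_\Delta^{(i)}$ is MCM of the right rank) only give module-finiteness and Cohen--Macaulayness of $\Gamma$ over $\widetilde Z$; they say nothing a priori about global dimension, and in general forming a skew or Rees-type extension can increase global dimension. This is a real theorem, not a formality. The paper closes the gap by citing \cite[Prop.~3.1(1)]{Stafford-VdB}, which proves directly that the canonical cover $\Gamma$ of a homologically homogeneous order is again homologically homogeneous. If you want a self-contained argument you would need to reproduce something like that proposition (e.g.\ via a local-duality or depth argument showing simple $\Gamma$-modules all have projective dimension $\dim Z$); otherwise, cite it.
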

\begin{proof}
Since the proof is local on $X,$ we may suppose that if $m$ is the
index of $(X,D),$ then $\omega_{X}^{(m)}(mD)$ is isomorphic to $Z$.
Then putting $\widetilde{Z}:=\bigoplus\omega_{X}^{(i)}(\left\lfloor iD\right\rfloor ),$
we can construct a log canonical cover of $(X,D),$ 
\[
\phi:\widetilde{X}=\Spec{}\widetilde{Z}\to X=\Spec{}Z
\]
as in (\ref{eq:log can cover}). On the other hand, the isomorphism
$\omega_{X}^{(m)}(mD)\cong Z$ defines an isomorphism $\omega_{\Delta}^{(m)}\cong\Delta$
as $\Delta$-bimodules and we have a canonical cover of $\Delta,$
\[
\Gamma:=\Delta\oplus\omega_{\Delta}\oplus\cdots\oplus\omega_{\Delta}^{(m-1)}.
\]
We show in \ref{prop:center omega} that $Z(\omega_{\Delta}^{(i)})=\omega_{X}^{(i)}(\left\lfloor iD\right\rfloor )$
and $Z(\Gamma)=\widetilde{Z}.$

Since~\cite[Prop.~3.1(1)]{Stafford-VdB} shows that $\Gamma$ is also homologically homogeneous, so by Theorem \ref{thm:stafford-vdb},
$\widetilde{Z}$ has rational singularities. Now, from Propositions \ref{prop:rational iff canonical}
and \ref{prop:cover singularities}, the pair $(X,D)$ has Kawamata log
terminal singularities. The last assertion is straightforward.
\end{proof}
We note the following corollaries.
\begin{corollary}
  Let $A$ be an unramified NCCR of a commutative algebra $R$.  Then $R$ is log terminal and $\QQ$-Gorenstein.
\end{corollary}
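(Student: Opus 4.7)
The plan is that this corollary is essentially a direct unpacking of Theorem~\ref{thm:main} in the special case where the boundary divisor vanishes. First, I would translate the hypothesis: by Definition~\ref{definition:twistedRamified}, an unramified NCCR of $R$ is a prime affine homologically homogeneous $k$-algebra $A$ that is module finite over its centre $Z(A) = R$ and whose discriminant divisor $D$ is zero. In particular, every codimension-one prime of $R$ has ramification index $e_p = 1$, so all the local terms $(e_p-1)/e_p$ in the definition of $D$ vanish.

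Next I would apply Theorem~\ref{thm:main} to $A$ with $X = \Spec R$ and $D = 0$. The theorem concludes that the pair $(X, 0)$ has Kawamata log terminal singularities. By the definition of KLT in Section~2, this requires $K_X + 0 = K_X$ to be $\QQ$-Cartier, which is exactly the statement that $R$ is $\QQ$-Gorenstein. Granted that, KLT means that for every exceptional divisor $E$ over a resolution we have $a_E > -1$. By the paper's own conventions (the sentence following the definition of KLT), this is precisely what it means for $X$ to have log terminal singularities when $D = 0$.

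Hence both conclusions follow: $R$ is $\QQ$-Gorenstein because $(X,0)$ being $\QQ$-Gorenstein is part of the KLT statement, and $R$ is log terminal because KLT with trivial boundary is log terminal in the paper's terminology. There is no real obstacle: the corollary is a one-line consequence of Theorem~\ref{thm:main} once one observes that ``unramified'' in the sense of Definition~\ref{definition:twistedRamified} is exactly the condition $D=0$ on the discriminant divisor built in Section~4. I would write the proof as a two-sentence argument citing the definition and the theorem.
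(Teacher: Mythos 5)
Your proposal is correct and is exactly the intended argument; the paper simply states this corollary without proof, as the immediate specialization of Theorem~\ref{thm:main} to the case $D=0$ of Definition~\ref{definition:twistedRamified}, where the paper's definition of KLT already packages in the $\QQ$-Gorenstein condition and reduces to the paper's notion of log terminal when the boundary vanishes.
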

\begin{corollary}
  Let $\mathcal{A}$ be a (twisted, ramified) NCCR of the pair $(X,D)$.  Then the pair is Kawamata log terminal and $\QQ$-Gorenstein.
\end{corollary}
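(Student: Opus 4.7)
The plan is to treat this corollary as a direct translation of Theorem~\ref{thm:main} into the language of twisted ramified NCCRs. First I would unpack Definition~\ref{definition:twistedRamified}: to say that $\mathcal{A}$ is a (twisted, ramified) NCCR of the pair $(X,D)$ means exactly that $\mathcal{A}$ is a prime affine $k$-algebra, module finite over its centre, and homologically homogeneous, together with the identification $(X,D) = \LZ(\mathcal{A})$, i.e.\ $X = \Spec Z(\mathcal{A})$ and $D$ is the discriminant divisor $\sum_{p}\bigl((e_p-1)/e_p\bigr)D_p$ constructed in Section~4.

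With this dictionary in hand, these are precisely the standing hypotheses of Theorem~\ref{thm:main}, applied with $\Delta = \mathcal{A}$, $Z = Z(\mathcal{A})$, and $D$ the discriminant divisor of $\mathcal{A}$. A direct invocation of Theorem~\ref{thm:main} therefore yields that $(X,D)$ has Kawamata log terminal singularities. The $\QQ$-Gorenstein half of the conclusion is then automatic: in Section~2 the klt property is only defined under the prior assumption that $K_X+D$ is $\QQ$-Cartier, so $\QQ$-Gorensteinness is already packaged inside the statement that $(X,D)$ is klt.

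The main obstacle is essentially empty. All of the mathematical content has already been carried out in Theorem~\ref{thm:main}, which in turn rests on the codimension-one centralizer computation of Proposition~\ref{prop:center omega} and on the Stafford--Van den Bergh rationality theorem (Theorem~\ref{thm:stafford-vdb}) applied to the canonical cover $\Gamma$. The corollary itself is then a purely terminological rephrasing of the main theorem against Definition~\ref{definition:twistedRamified}, requiring only this bookkeeping step.
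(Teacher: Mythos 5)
Your proposal is correct and matches the paper's (implicit) argument: the corollary is stated immediately after Theorem~\ref{thm:main} with no separate proof precisely because it is Theorem~\ref{thm:main} read against Definition~\ref{definition:twistedRamified}, and you have unpacked that dictionary accurately. Your observation that $\QQ$-Gorensteinness is already built into the klt conclusion (since in Section~2 klt is only defined for $\QQ$-Gorenstein pairs, and the proof of Theorem~\ref{thm:main} produces the required index $m$ from the finite order of $\omega_\Delta$ as a bimodule) is also the right reading.
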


We present an alternate proof of the theorem above.  First note that:

\begin{prop}\label{prop:crepant}
  Let $\Delta$ be an affine, prime, finite over centre, homologically homogeneous $k$-algebra.  Let $\Gamma$ be the canonical cover of $\Delta$ as in ~\ref{prop:center omega}.
  Let $(X,D)$ the the log centre of $\Delta$ and let $\widetilde{X} = \Spec Z(\Gamma)$.  Then
we have that $K_{\widetilde{X}}=\pi^{*}(K_{X}+D)$. \end{prop}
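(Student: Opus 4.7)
The plan is to deduce the crepancy statement directly from the identification of $\widetilde{X} \to X$ as the log canonical cover of $(X,D)$, which was essentially already carried out in Proposition~\ref{prop:center omega}, combined with the general properties of log canonical covers recalled in Section~2.

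To set up, I would first observe that in order for $\Gamma$ to make sense as a graded $\Delta$-algebra, and not merely a direct sum of $\Delta$-bimodules, one needs a bimodule isomorphism $\omega_{\Delta}^{(m)} \cong \Delta$ for the index $m$ used to define $\Gamma$. Passing to centralizers and applying Proposition~\ref{prop:center omega} produces, locally on $X$, an isomorphism $\omega_X^{(m)}(mD) \cong \cO_X$. In particular $m(K_X + D)$ is Cartier, so $(X,D)$ is $\QQ$-Gorenstein with index dividing $m$, and the log canonical cover construction of Section~2.2 genuinely applies to the pair $(X,D)$.

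Next, Proposition~\ref{prop:center omega} computes the centre of $\Gamma$ as
\[
\widetilde{Z} \;=\; \bigoplus_{i=0}^{m-1} \omega_X^{(i)}\bigl(\lfloor iD\rfloor\bigr),
\]
which is precisely the $\cO_X$-algebra $\cA$ defined in formula~(\ref{eq:log can cover}). Hence $\pi\colon \widetilde{X} \to X$ coincides with the log canonical cover $\phi$ of $(X,D)$. Invoking the proposition stated immediately after~(\ref{eq:log can cover}) then yields the claimed equality $K_{\widetilde{X}} = \pi^{*}(K_X + D)$.

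I do not expect a genuine obstacle: the substantive content has been absorbed into Proposition~\ref{prop:center omega} and the standard theory of log canonical covers. The one point meriting a brief word is the compatibility of the algebra structures on the two sides, since the bare identification of $\widetilde{Z}$ as a sum of reflexive sheaves is not quite enough. The multiplication on $\Gamma$ is induced by the fixed bimodule trivialization $\omega_{\Delta}^{(m)} \cong \Delta$, while the multiplication on $\cA$ is induced by the trivialization $\omega_X^{(m)}(mD) \cong \cO_X$ obtained from it by taking centralizers. Because the identifications $\Z(\omega_{\Delta}^{(i)}) \cong \omega_X^{(i)}(\lfloor iD\rfloor)$ in Proposition~\ref{prop:center omega} are compatible with bimodule multiplication (they are realised as reflexive hulls of tensor products), the two algebra structures match, and the identification $\pi = \phi$ follows.
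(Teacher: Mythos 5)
Your proof is correct, but it is a genuinely different argument from the one the paper gives. You reduce Proposition~\ref{prop:crepant} to Proposition~\ref{prop:center omega} (which identifies $\widetilde{X}\to X$ with the log canonical cover of $(X,D)$), and then invoke the standard property of log canonical covers from Section~2.2, namely $K_{\widetilde{X}}=\phi^{*}(K_{X}+D)$. Your added remark about compatibility of the graded algebra structures on $\Gamma$ and on $\cA$ is a sensible point and your sketch of why it holds is reasonable.

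The paper instead proves the statement by a crepancy diagram chase. It records three isomorphisms,
\[
\omega_\Gamma \simeq \Gamma \,\hat{\otimes}_\Delta\, \omega_\Delta, \qquad
\omega_\Gamma \simeq \omega_{\widetilde{X}}\,\hat{\otimes}_{\widetilde{X}}\, \Gamma, \qquad
\omega_\Delta^{(m)} \simeq \omega_{X}^{(m)}(mD)\otimes_X \Delta,
\]
interprets them as saying that in the square
\[
\xymatrix{\Spec\Delta\ar[d] & \Spec\Gamma\ar[l]\ar[d]\\ (X,D) & \widetilde{X}\ar[l]}
\]
the top, left, and right maps are crepant (in a suitably extended, noncommutative sense), and concludes that the bottom map $\widetilde{X}\to(X,D)$ must be crepant as well. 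What the paper's approach buys is a self-contained derivation that treats $\Spec\Delta$ and $\Spec\Gamma$ as full participants in the crepancy calculus; this is what makes the subsequent ``alternate proof'' of Theorem~\ref{thm:main} genuinely different from the first proof. Your route, while shorter and perfectly correct, collapses Proposition~\ref{prop:crepant} into a corollary of Proposition~\ref{prop:center omega} plus the general theory of log canonical covers, which makes the resulting ``alternate proof'' of Theorem~\ref{thm:main} essentially a restatement of the first one rather than an independent argument.
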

\begin{proof} Let $m$ be the index of $K_X+D$ and
  let $\hat{\otimes}$ denote the reflexive hull of the tensor product.
  We have the following isomorphisms:
  $$ \Gamma \hat{\otimes}_\Delta \omega_\Delta \simeq \omega_\Gamma$$
  $$ \omega_\Gamma \simeq \omega_{\widetilde{X} }\hat{\otimes}_{\widetilde{X}} \Gamma$$
  $$ \omega_\Delta^{(m)} = \omega_{X}^{(m)}(mD)\otimes_X \Delta.$$
In other words if we consider the 
following diagram:

\[
\xymatrix{\Spec{}\Delta\ar[d] & \Spec{}\Gamma\ar[l]\ar[d]\\
(X,D) & \widetilde{X}\ar[l]
}
\]

We know that all maps except the bottom one are crepant. Hence the
bottom map is crepant as well.\end{proof}

Using Prop.~\ref{prop:crepant} we obtain another proof of Theorem~\ref{thm:main}.
\begin{proof}
  Since $\Gamma$ is homologically homogeneous with $Z(\Gamma) = \widetilde{Z}$,
by Thm.~\ref{thm:stafford-vdb}, we know that $\Spec \widetilde{Z} = \widetilde{X}$ is has rational singularities. Therefore $\widetilde{X}$ is canonical
by Prop.~\ref{prop:rational iff canonical} and hence log terminal. By Prop.~\ref{prop:crepant} we have that $K_{\widetilde{X}}=\pi^{*}(K_{X}+D).$
So by Prop.~\ref{prop:cover singularities}, we have that $(X,D)$ is
Kawamata log terminal. In particular if $X$ is $\QQ-$Gorenstein then it is
log terminal.
\end{proof}

\section{Corollaries and Examples}

Some algebras are well known to be homologically homogeneous and so provide examples of twisted and ramified NCCRs.  We would like to thank Kenneth Brown for the references for the following statement.

\begin{prop}
  Suppose that $\Delta$ is finite over its centre and $\Delta$ is one of
  \begin{enumerate}
  \item a Hopf algebra over an algebraically closed field.
  \item a connected graded affine noetherian algebra of finite global dimension.
\item a not necessarily connected Artin-Schelter regular algebra.
  \end{enumerate}
  then $\Delta$ is a (twisted, ramified) NCCR of the pair $(Z,D) = \LZ(\Delta)$ which is Kawamata log terminal and $\QQ$-Goreinstein.
  \end{prop}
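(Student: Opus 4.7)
The plan is to verify, in each of the three cases, that $\Delta$ is homologically homogeneous, and then apply Theorem~\ref{thm:main} directly to conclude that $\LZ(\Delta) = (Z,D)$ is Kawamata log terminal and $\QQ$-Gorenstein, so that $\Delta$ qualifies as a twisted ramified NCCR by Definition~\ref{definition:twistedRamified}. By the criterion \cite[Lemma 2.4]{Stafford-VdB} recalled before Definition~\ref{definition:twistedRamified}, homological homogeneity for a prime affine algebra $\Delta$ that is module-finite over its centre $Z$ amounts to showing that $\Delta$ is a Cohen-Macaulay $Z$-module with $\GKdim \Delta = \gldim \Delta$. So the entire burden of the proof is to produce this homological input in each case; the geometric implication is already contained in Theorem~\ref{thm:main}.

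Case (3) is the anchor. An Artin--Schelter regular algebra has finite global dimension equal to its Gelfand--Kirillov dimension by definition, and once $\Delta$ is assumed to be module-finite over its centre the theory of twisted Calabi--Yau algebras (in the line of work of Stafford--Zhang and Van den Bergh) yields the Cohen--Macaulay property of $\Delta$ over $Z$, hence homological homogeneity. Case (2) I would reduce to case (3) by appealing to the theorem of Stephenson--Zhang that any connected graded affine noetherian algebra of finite global dimension is automatically Artin--Schelter regular. Case (1), the Hopf algebra case, I would handle by invoking the results of Brown and Goodearl that any affine noetherian PI Hopf algebra (i.e.\ finite over its centre) over an algebraically closed field is AS--Gorenstein; under the finite global dimension hypothesis that is implicit in asking for an NCCR, this upgrades to AS--regularity, which by case (3) gives homological homogeneity.

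The main obstacle is not geometric but bibliographic: in each case one must locate and correctly invoke the theorem which promotes the given hypotheses to homological homogeneity under the assumption of finiteness over the centre. Once this is in place, Theorem~\ref{thm:main} immediately delivers the Kawamata log terminal and $\QQ$-Gorenstein conclusions of the proposition.
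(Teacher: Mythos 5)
Your proposal takes essentially the same skeleton as the paper's proof: in each case, reduce to showing homological homogeneity and then invoke Theorem~\ref{thm:main} (or, more precisely, the definition of a twisted ramified NCCR plus the corollaries). Where you differ is in how the homological homogeneity is sourced. The paper's proof is a pure citation: Brown--Goodearl~\cite[Prop.~1.6]{Brown-Goodearl} for Hopf algebras, Stafford--Zhang~\cite{Stafford-Zhang} for connected graded affine noetherian algebras of finite global dimension, and Reyes--Rogalski~\cite{Reyes-Rogalski} for not-necessarily-connected Artin--Schelter regular algebras. You instead take case (3) as primary, arguing via twisted Calabi--Yau theory that a module-finite AS-regular algebra is Cohen--Macaulay over its centre, and then reduce case (2) to case (3) via the Stephenson--Zhang theorem (the paper cites Stafford--Zhang, a different reference, for this step), and case (1) to case (3) via Brown--Goodearl plus a finite-global-dimension hypothesis. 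That last point is worth flagging: the proposition as stated does not explicitly assume finite global dimension in the Hopf algebra case, and you are right that some such hypothesis is being used implicitly --- a Hopf algebra of infinite global dimension (e.g.\ a Taft algebra) cannot be homologically homogeneous. So your observation is a legitimate one about the precision of the statement, and your route, while reorganised around case (3) rather than three independent citations, is equivalent in substance to the paper's and, if anything, exposes the dependency structure among the three cases more clearly.
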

\begin{proof}
The first case is ~\cite[Prop.~1.6]{Brown-Goodearl}.
The second case is in~\cite{Stafford-Zhang}.
The third case is in~\cite{Reyes-Rogalski}.
  \end{proof}

\begin{corollary}
  The log centres of the following algebras are Kawamata log terminal.
  \begin{itemize}
  \item Quantised function algebras of semisimple groups $\cO_q(G)$ when $q$ is a root of unity.
    \item  Quantised enveloping algebras of semisimple Lie algebras $U_q(\mathfrak{g})$ when $q$ is a root of unity.
  \end{itemize}
\end{corollary}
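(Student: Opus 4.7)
The plan is to deduce this corollary directly from the preceding Proposition by verifying that both $\cO_q(G)$ and $U_q(\mathfrak{g})$, when $q$ is a root of unity, satisfy the hypotheses of case (1): namely, that they are Hopf algebras over our algebraically closed base field which are module-finite over their centres.

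First I would recall that both constructions yield Hopf algebras over any field by their very definition (the coproduct, counit, and antipode on the standard generators are classical). The point requiring actual input is the module-finiteness over the centre at a root of unity. For $U_q(\mathfrak{g})$ this is the celebrated theorem of De Concini--Kac--Procesi: at an $\ell$-th root of unity (with $\ell$ odd, and coprime to $3$ in type $G_2$), the subalgebra generated by the $\ell$-th powers of the standard generators lies in the centre, and $U_q(\mathfrak{g})$ is a free module of finite rank over it. For $\cO_q(G)$ the analogous statement is due to De Concini--Lyubashenko (and Brown--Gordon): at a root of unity, $\cO_q(G)$ is a finitely generated module over a central Hopf subalgebra isomorphic to $\cO(G)$. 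Once module-finiteness over the centre is in hand, the preceding Proposition (case (1), citing Brown--Goodearl) tells us that these algebras are homologically homogeneous, hence form (twisted, ramified) NCCRs of their log centres.

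Having verified the hypotheses, the conclusion is immediate from Theorem~\ref{thm:main}: the log centre $\LZ(\Delta) = (\Spec Z(\Delta), D)$ is Kawamata log terminal (and $\QQ$-Gorenstein). I would also briefly note that both examples are known to be prime, which is needed in order to apply the framework of Section 4; for $U_q(\mathfrak{g})$ and $\cO_q(G)$ at roots of unity primeness follows from the fact that they are domains (or, more weakly, from the results quoted in Brown--Goodearl).

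There is no real obstacle here; the statement is a packaging of results already in the literature. The only substantive point to be careful about is to state precisely the hypothesis on the order of $q$ (so that the De Concini--Kac and De Concini--Lyubashenko theorems apply) and to work over an algebraically closed field of characteristic zero, as required throughout the paper.
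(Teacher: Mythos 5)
Your proposal matches the paper's intended argument: the corollary is stated without a proof precisely because it is a direct application of the preceding Proposition, case (1), and you correctly fill in the needed verifications (Hopf structure, module-finiteness over the centre at a root of unity via De Concini--Kac--Procesi and De Concini--Lyubashenko/Brown--Gordon, primeness). The additional caveats you raise about the precise conditions on the order of $q$ and about primeness/affineness are reasonable and consistent with the standing hypotheses of the paper.
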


Next, we produce an example of a variety $X$ which is not $\QQ$-Gorenstein, but we add an appropriate boundary divisor $D$, we obtain a $\QQ$-Goreinstein pair $(X,D)$ which is $\QQ$-Goreinstein and admits a ramified NCCR.

\begin{example}
  Consider the quiver with vertices $Q_0 =\{ 0,1 \}$ with two arrows $u,v$ from $0$ to $1$ and two arrows $\bar{u},\bar{v}$ from $1$ to $0$.
  We let $A$ be the path algebra of this quiver with relations
  $$u\bar{u}v = v \bar{u} u \quad u\bar{v}v = v \bar{v} u $$ $$
  \bar{u} u \bar{v} =  \bar{v} u\bar{u}, \quad  \bar{u} v \bar{v} =  \bar{v} v \bar{u}.$$  Consider the commutative ring and fractional ideals
  $$R =k[a,b,c,d]/(ad-bc)$$
  $$ I = aR+cR \quad \quad
   I^* = R+ \frac{b}{a} R.$$
  Note also that $$A \simeq \begin{pmatrix} R & I \\ I^* & R \end{pmatrix}$$
  under the isomorphism
  $$ u \mapsto \begin{pmatrix} 0 & a \\  0 & 0 \end{pmatrix} \quad \quad
  v \mapsto \begin{pmatrix} 0 & c \\  0 & 0 \end{pmatrix} $$
  $$ \bar{u} \mapsto \begin{pmatrix} 0 & 0 \\  1 & 0 \end{pmatrix} \quad \quad
  \bar{v} \mapsto \begin{pmatrix} 0 & 0 \\  \frac{b}{a} & 0 \end{pmatrix}.
  $$
  It is well known that $A$ is Azumaya except at the origin, and is an
  NCCR of $R$.
  We consider an action of $\ZZ/2$ on $A$ that fixes the vertices and acts as
  $-1,1,1,1$ on the arrows $u,v,\bar{u},\bar{v}$ respectively.
  Note that the skew group ring $$A \rtimes \ZZ/2$$  has global dimension three and is homologically homogeneous.  Hence it is a ramified NCCR of its log centre.

  We compute
  $$ Z(A) = R = k[a,b,c,d](ad-bc)$$
  $$ S = Z(A\rtimes \ZZ/2) = Z(A)^{\ZZ/2} = k[a^2,ab,b^2,c,d] \subset R.$$
  The referee noted 
  $S \simeq k[x,y,z,c,d]/(xz-y^2,xd-yc,yd-zc)$ which is the base of the Francia flip.  So there is a small resolution $f:Y \to \Spec S$ given by blowing up the ideal $(c,d)$.  So $K_Y$ must be ample or anti-ample, and $K_Y = f^*K_S.$  For more details see~\cite[(2$\cdot$1),Lem.~2]{Brown} or~\cite[(3.8)]{Reid}.
  If $S$ is $\QQ$-Gorenstein then $nK_S = 0$ for some $n$, giving a
  contradiction.  We also give a toric computation below which shows that $S$ is not $\QQ$-Gorenstein.

  We write $N = \ZZ^3$ and $N' = \ZZ^2 \oplus \frac{1}{2}\ZZ$ and let
  $M = N^\vee, M' = N'^\vee$ be the dual lattices. Let $\sigma$ is the cone
  generated by the columns of the matrix:
  $$\begin{pmatrix}
    0 & 0 & 1 & 1 \\
    0 & 1 & 0 & 1\\
    1 & 1 & 1 & 1
  \end{pmatrix}.$$  We will see that the map of toric varieties determined by
  $\sigma \subset N$ to $\sigma \subset N'$ is the map $\Spec R \to \Spec S$.
We write
  $$k[M] = k[x^{\pm 1},y^{\pm 1},z^{\pm 1}] \subset
  k[x^{\pm 1},y^{\pm 1},z^{\pm 2}] = k[M'].$$
  We have
  $$ R = k[a,b,c,d] = k[zx^{-1},zy^{-1},y,x] =k[\sigma^\vee] \subset k[M'].$$
  $$ S = k[a^2,ab,b^2,c,d] = k[z^2x^{-2},z^2x^{-1}y^{-1},z^2y^{-2},y,x] =k[\sigma^\vee] \subset k[M'].$$
 
  We compute that $A\rtimes \ZZ/2$ is ramified with index two on the toric divisor $D_\rho$ corresponding to the ray generated by $\rho = (0,0,1/2) \in \sigma \subset N'$.
  Hence the pair $(\Spec S,\frac{1}{2} D_\rho)$ is $\QQ$-Gorenstein and admits a ramified NCCR.
 If we write the primitive generators of the rays of $\sigma$ in terms of the basis
  $e_1,e_2,\frac{1}{2}e_3$ of $N',$ we obtain the columns of the following matrix
  $$C = \begin{pmatrix}
    0 & 0 & 1 & 1 \\
    0 & 1 & 0 & 1 \\
    1 & 2 & 2 & 2
  \end{pmatrix}.$$
  Let $D_1,\ldots,D_4$ be the toric Weil divisors corresponding to the rays.
  A toric Weil divisor $n_1D_1+n_2D_2+n_3D_3+n_4D_4$ is $\QQ$-Cartier if and only
  if $(n_1,\ldots,n_4)$  is in the $\QQ$ span of the rows of the matrix $B$.
  It is clear that $K_{\Spec S} = -D_1-D_2-D_3-D_4$ is not in the row span of $C$. However as above we have $\frac12 D_\rho = \frac12 D_1$ then
  $$K+B = -\frac12 D_1-D_2-D_3-D_4$$ is in the row span of $C$, verifying that the pair  $(\Spec S,\frac12 D_\rho)$ is $\QQ$-Gorenstein.
  So we see that  the toric variety $\Spec S$ is not $\QQ$-Goreinstein and does
  not admit an unramified NCCR, and there is a ramified  NCCR of the pair $(\Spec S,\frac12 D_\rho)$.

\end{example}  
In the next example, the singularity $X$ is well known to not have a classical NCCR~\cite[Thm.~3.1,Rmk.~3.2]{Dao10}, but with an appropriate divisor $D$, we obtain a twisted ramified NCCR of the pair $(X,D)$.  This shows that the pair is Kawamata log terminal and $\QQ$-Gorenstein.  We would like to thank Michael Wemyss for providing us with this example.

\begin{example}
  Let $A$ be the $k$-algebra generated by $a,b,c$ with relations
  $$ac+ca,\quad bc+cb, \quad ab-ba -2c^3.$$
  The centre $Z(A)$ is generated by
  $$ x := a^2,\quad y := b^2, \quad z:=ab+ba, \quad t := c^2.$$
  One may verify that
  $$z^2-4xy = (ab-ba)^2 = 4c^6 = 4t^3.$$
  So the centre has the compound Du Val singularity with a quadric cone in its resolution~\cite[Fig.~8-3-5]{Matsuki}.  This singularity does not have a noncommutative crepant resolution~\cite[Thm.~3.1,Rmk.~3.2]{Dao10}.

  The resolution
  $$ 0 \to A \stackrel{\begin{pmatrix} b & a & c \end{pmatrix}}{\longrightarrow}
  A^3 \stackrel{\begin{pmatrix} -c & 0 & -a \\ 0 & c & b \\ -b & a & -2c^2
  \end{pmatrix}}{\longrightarrow}
  A^3 \stackrel{\begin{pmatrix} a \\ b \\ c \end{pmatrix}}{\to}
  A \to k \to 0$$
  shows that $A$ is Artin-Schelter regular and so is homologically homogeneous.
  Lastly, we compute that $A$ is ramified on the divisor $B$ defined by $t=0.$
  So if we let $X= \Spec \Z(A)$ and $D=\frac12 B$ we have that $(X,D)$ has a twisted noncommutative crepant resolution and so is Kawamata log terminal and $\QQ$-Gorenstein.

  We also point out that $A$ can be viewed as a Clifford algebra. Let
  $$Z =k[x,y,z,t]/(z^2-4xy-4t^3)$$
  and define a quadratic form $Q$ on $Z$ given by the Gram matrix
  $$ \begin{pmatrix} 2x & z & 0 \\
    z & 2y & 0 \\
    0 & 0 & 2t
  \end{pmatrix}.$$
  Write $\Cl(Q)$ for the Clifford algebra of this form, generated by $a,b,c$ as an $Z$-algebra with relations
  $$2a^2=2x,\quad ab+ba = z, \quad 2b^2 =2y, \quad ac+ca = bc+cb = 0, \quad 2c^2 =2t.$$
  Note that in this algebra we have $(ab-ba)^2 = (2c^3)^2$ giving
  a prime ideal generated by $(ab-ba-2c^3)$ so we have
   $$A = \Cl(Q)/(ab-ba-2c^3).$$  This description can be used to show that $A$ is non-trivially twisted.
  \end{example}

\bibliography{bibs}{}

\bibliographystyle{amsplain}

\end{document}